\newtheorem{theorem}{Theorem}
\newtheorem*{claim*}{Claim}
\theoremstyle{definition}
\theoremstyle{remark}
\renewcommand{\deg}{deg}
\newcounter{fig}
\newcommand{\f}{\refstepcounter{fig} Fig. \arabic{fig}. }
\title{On the number of non-hexagons in a planar tiling}
\author{Arseniy~Akopyan}
\address{Arseniy~Akopyan, Institute of Science and Technology Austria (IST Austria), Am Campus~1, 3400 Klosterneuburg, Austria}
\email{akopjan@gmail.com}
\begin{document}


\begin{abstract}
We give a simple proof of T.~Stehling's result~\cite{stehling1989uber}, that in any normal tiling of the plane with convex polygons with number of sides not less than six, all tiles except the finite number are hexagons.
\end{abstract}

\maketitle

A tiling of the plane with convex polygons is \emph{normal} if these polygons are not very thin (each contains a disk of radius $r$) and not very long (each is contained in a disc of radius~$R$).
It is well known that, in a normal tiling of the plane into convex polygons, the (upper) average side number of the polygons is at most $6$, cf. \cite[p. 61, inequality $\overline{p}\leq 6$]{toth1953lagerungen} and \cite{niven1978convex}. The proof is based on double counting the angle sums.
T. Stehling in his thesis \cite{stehling1989uber} studied the properties of combinatorics of normal tiling, where in particular, he proved the following theorem.

\begin{theorem}[T. Stehling~\cite{stehling1989uber}]
	\label{thm:main theorem}
	If the plane is tiled with convex polygons, such that all of them have at least six sides, contain a disk of radius $r$ and are contained in a disk of radius $R$, then all but a finite number of them are hexagons.
\end{theorem}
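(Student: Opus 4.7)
The plan is to quantify the classical angle-sum double counting on a finite disk $B_\rho$ (the same identity that yields the average side number $\le 6$) to get a bound of $O(\rho)$ on the number of non-hexagons contained in $B_\rho$, and then to sharpen this to a bound independent of $\rho$.

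First I would fix a large $\rho$ and let $T_\rho$ denote the set of tiles contained in $B_\rho$. Normality gives three facts I would use: $|T_\rho| = \Theta(\rho^2)$, at most $O(\rho)$ tiles touch the boundary of the covered region, and every interior angle of every tile is bounded below by some $\alpha_0 = \alpha_0(r,R) > 0$ (an angle close to $\pi$ can occur only far from the inscribed disk of its tile, giving a uniform lower bound), so every vertex has degree at most $2\pi/\alpha_0$. The classical double-counting identity $\sum_{f \in T_\rho} (k_f - 2)\pi = \sum_v \theta_v$, with $\theta_v$ the angle sum at $v$ of tiles in $T_\rho$ and $\theta_v = 2\pi$ at every interior vertex, combined with the edge--face identity $\sum_f k_f = 2E - E_{\partial}$ and Euler's formula $V - E + F = 1$ for the induced planar subcomplex, expresses $\sum_{f \in T_\rho}(k_f - 6)$ in closed form as a boundary contribution of magnitude $O(\rho)$. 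Since every non-hexagon contributes at least $1$ to this sum, $T_\rho$ contains at most $C\rho$ non-hexagons.

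The hard part will be improving this $O(\rho)$ bound to an estimate independent of $\rho$. The boundary term is a sum of $O(\rho)$ per-vertex contributions of constant size, so to go further one must find genuine cancellation. My first attempt would be a discharging scheme in which each interior non-hexagon sends its constant angle excess outward along an essentially disjoint path of tiles, so that the non-hexagon count gets bounded by the number of disjoint paths that can terminate on a boundary of length $O(\rho)$; the delicate part is showing such paths exist, are roughly disjoint, and cannot circulate in the interior (for which the bounded vertex degree and the convexity/normality constraints should help). An alternative I would pursue in parallel is to exhibit cancellation \emph{inside} the boundary term itself, perhaps by averaging the estimate over a one-parameter family of nested disks and showing that the coefficient of the linear-in-$\rho$ term vanishes identically, leaving an $O(1)$ remainder.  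Given that Akopyan advertises a \emph{simple} proof, I expect one of these lines---most likely the structural one---yields $\sum_f (k_f - 6) = O(1)$, after which the theorem follows immediately by letting $\rho \to \infty$.
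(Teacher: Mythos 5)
Your first paragraph is essentially correct but is also the easy, classical part: the angle-sum/Euler double counting on a finite patch gives $\sum_{f\in T_\rho}(k_f-6)=O(\rho)$, hence $O(\rho)$ non-hexagons in $B_\rho$. The entire content of the theorem is the passage from $O(\rho)$ to $O(1)$, and your second paragraph does not carry this out --- it names two candidate strategies without executing either, and the first cannot work as stated. If each interior non-hexagon is charged along its own disjoint path terminating on the boundary, a boundary of length $O(\rho)$ can absorb $O(\rho)$ such paths, so you recover exactly the bound you already had. The second strategy (``show the coefficient of the linear-in-$\rho$ term vanishes'') is the theorem restated in disguise: no mechanism for the cancellation is offered, and the boundary contribution in the Euler identity is genuinely $\Theta(\rho)$ term by term, so averaging over nested disks does not by itself produce any cancellation. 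As it stands the proposal proves only the average-side-number bound, not finiteness.

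The paper's proof does not refine the boundary term at all; it abandons the angle-sum count and instead measures how index distorts the \emph{growth of metric balls} in the dual triangulation $G$ (vertices are tiles, edges join tiles sharing a side, triangles correspond to vertices of the tiling, normalized to be trivalent and edge-to-edge). Writing $I(i)$ for the total index carried by the combinatorial sphere $S(i)$ around a fixed tile and $J(i)$ for the number of ``generating'' vertices on it, one gets the recursions $|S(i+1)|=|S(i)|+I(i)+J(i)$ and $J(i+1)=I(i)+J(i)$, so a total index at least $C$ eventually forces $|S(i)|\geq (C+6)i-O(1)$ and hence $|B(i)|\geq \frac{C+6}{2}i^2-O(i)$. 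On the other hand all tiles of $B(i)$ lie in a Euclidean disk of radius $(i+1)D$ and each has area at least $A$, so $|B(i)|\leq \pi(i+1)^2D^2/A$; comparing the quadratic coefficients gives $C\leq 2\pi D^2/A-6$. The idea you are missing is that a unit of index is not a constant-size defect to be discharged to the boundary: it permanently increases the perimeter growth rate of every subsequent sphere, so its cumulative effect is quadratic in the radius, and the contradiction comes from the Euclidean area bound, not from a boundary count.
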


In other words, we can not ``spoil'' the honeycomb tiling with infinite number of polygons having more than six sides.
It seems that this result was missed by the mathematical community and the author learned it as an open question from S.~Markelov~\cite{markelov2014lj}.
We give a simple proof of it, expressing the upper bound in terms of the area and the diameter of the tiles.
Define the \emph{index} of a polygon as the number of its sides minus $6$.
Theorem~\ref{thm:main theorem} is a trivial corollary of the following theorem.

\begin{theorem}
	\label{thm:bound theorem}
	If the plane is tiled with convex polygons, which have at least six sides, have area at least $A$ and diameter not greater than $D$, then the sum of indices of the tiles is not greater than $\frac{2\pi D^2}{A}-6$.
\end{theorem}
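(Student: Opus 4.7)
The plan is in two parts: derive a boundary-based combinatorial inequality via Euler's formula applied to any finite simply-connected sub-tiling, then combine it with a uniform lower bound on tile angles to extract the sharp constant.

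For the combinatorial step, fix a finite simply-connected union $\Omega$ of $F$ tiles in the tiling, with $V_{\text{int}}$ interior vertices (those incident only to tiles of $\Omega$) and $b$ boundary vertices of $\bigcup\Omega$. Euler's formula $V - E + F = 1$ together with the edge--face incidence $2E = \sum_{P\in\Omega} n_P + b$ (each interior edge lying in two tiles, each boundary edge in one) gives $\sum_{P\in\Omega} n_P = 2V_{\text{int}} + 2F + b - 2$, and hence
\[
\sum_{P\in\Omega}(n_P - 6) = 2V_{\text{int}} + b - 4F - 2.
\]
The identity also follows by double-counting angles, since the total interior-angle sum equals $\pi\sum(n_P-2)$ on the one hand and $2\pi V_{\text{int}} + (b-2)\pi$ on the other (the latter using the boundary polygon's interior-angle sum $(b-2)\pi$). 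At every interior vertex at least three tiles meet, so the tile--vertex incidence $\sum n_P \geq 3V_{\text{int}} + b$ forces $V_{\text{int}} \leq 2F - 2$, and substituting yields the basic bound $\sum_{P\in\Omega}(n_P - 6) \leq b(\Omega) - 6$.

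The geometric step provides the angle input. At any vertex of any tile with interior angle $\alpha$, the tile is contained in the circular sector of angle $\alpha$ and radius $\leq D$ based at that vertex; since this sector has area $\alpha D^2/2 \geq A$, one gets $\alpha \geq 2A/D^2$, so at most $\pi D^2/A$ tiles meet at any vertex. I would apply the basic combinatorial bound to a region $\Omega$ chosen with a short, tightly controlled boundary --- for instance, the tiles contained in a ball $B_\rho$, whose boundary polygon lies in the thin annulus $B_\rho \setminus B_{\rho-D}$ --- and combine it with the Gauss--Bonnet-type identity $\sum_{v\in\partial\Omega}(2\pi - \beta(v)) = (b+2)\pi$ and the angle lower bound at boundary vertices (each such vertex has at least one tile outside $\Omega$ contributing angle $\geq 2A/D^2$) to extract the sharp constant $2\pi D^2/A - 6$.

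The principal obstacle lies in this final step. The inequality $\sum \leq b - 6$ alone is not sufficient, since $b(\Omega)$ grows without bound as $\Omega$ exhausts the tiling even when $\sum(n_P-6)$ stays small --- the pure honeycomb already has $\sum = 0$ with $b$ arbitrarily large. Getting the correct constant $2\pi D^2/A$ will require a careful combination of the pointwise angle lower bound with the annular geometry of the boundary region, quantifying how much excess index can accumulate relative to the angular "capacity" available around the boundary.
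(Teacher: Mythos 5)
Your Euler-formula computation is correct as far as it goes: for a finite simply-connected union $\Omega$ of tiles one does get $\sum_{P\in\Omega}(n_P-6)=2V_{\mathrm{int}}+b-4F-2\leq b(\Omega)-6$, and the sector argument $\alpha\geq 2A/D^2$ is a valid (and pretty) way to bound the number of tiles meeting at a point. But the step you flag as the ``principal obstacle'' is not a technicality to be arranged later --- it is the entire content of the theorem, and the repair you sketch would fail. The boundary identity $\sum_{v\in\partial\Omega}(2\pi-\beta(v))=(b+2)\pi$ combined with the lower bound $2\pi-\beta(v)\geq 2A/D^2$ yields only $(b+2)\pi\geq 2Ab/D^2$, which is vacuous: by the isodiametric inequality $A\leq \pi D^2/4$, so $2A/D^2\leq\pi/2<\pi$ and the inequality holds for every $b$. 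No argument of this shape can bound $b$, because $b$ genuinely grows linearly in the radius (already for the honeycomb), and what must be controlled is not $b$ itself but the excess of $b$ over the boundary size of a flat tiling of the same radius --- a quantity your one-shot application of Euler's formula cannot isolate.

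The missing idea is a second-order, ball-growth comparison. The paper first normalizes so that exactly three tiles meet at each vertex (which only increases the index sum), passes to the dual triangulation, and analyzes the combinatorial spheres $S(i)$ around a base tile: it shows $|S(i+1)|=|S(i)|+I(i)+J(i)$ and $J(i+1)=I(i)+J(i)$, where $I(i)$ is the index carried by $S(i)$ and $J(i)$ counts ``generating'' vertices. Since all indices are nonnegative, a total index of at least $C$ forces $J(i)\geq C+6$ for large $i$, hence $|S(i)|\geq(C+6)i-O(1)$, hence at least $\frac{C+6}{2}i^2-O(i)$ tiles within combinatorial distance $i$; the Euclidean side caps this count by $\pi(i+1)^2D^2/A$ since those tiles lie in a disk of radius $(i+1)D$ and each has area at least $A$. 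Comparing the coefficients of $i^2$ --- not the boundary terms --- gives $C\leq 2\pi D^2/A-6$. Your inequality $\sum(n_P-6)\leq b-6$ is essentially the once-integrated form of the sphere recursion (accumulated index forces the boundary to be large); the theorem requires integrating once more and playing the resulting quadratic lower bound on the tile count against the quadratic area upper bound, which is exactly the step your proposal leaves open.
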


\begin{proof}
	First, without the loss of generality we may assume that for each vertex there are exactly three tiles meeting at it.
	Indeed, if it is not so, we can think of this vertex as of several vertices connected by edges of zero length.
	We can also assume, that the tiling is edge-to-edge, that is any two adjacent tiles share a common vertex or a full edge.
	Indeed, if the interior of a side of a tile $P$ contains a vertex of another tile then we can consider this vertex as a vertex of $P$.
	These operations only increase the sum of indices.
	
	The proof is based on comparing the geometric upper bound on number of pieces in a metric disk of radius $i$ with the combinatorial lower bound.
	
	Lets start with the latter bound.
	Consider the dual triangulation $G=(V, E, T)$, where vertices $V$ correspond to tiles, and two vertices are connected by an edge if they share a common side. Three vertices form a triangle if three corresponding tiles have a common vertex.

	Supply the $1$-skeleton of $G$ by the natural metric, that is each edge has length one.
	We fix some vertex $o$ as the origin.

	Now we describe the combinatorics of the universal cover $\widetilde G$, which by simply-connectness of $G$ will coincide with it.
	Denote by $B(i)$ and $S(i)$ the disk and the circle in the $1$-skeleton of $\widetilde G$, having radius $i$ and centered at $o$.
	The number $\deg(v)-6$ is called the \emph{index} of a vertex~$v$.

	If the vertex $o$ has degree $k$, the ball $B(1)$ contains $k+1$ points and has perimeter $k$.
	In the disk $B(1)$ all of these $k$ vertices have degree $3$, two of the edges at every vertex belong to the circle $S(1)$.
	
	In general, the vertices of $S(i)$, in the graph restricted to $B(i)$, have degree $4$ or $3$.
	This will be clear from our construction. 
	Call them \emph{regular} and \emph{generating} vertices, respectively. 
	
	Let us describe the annulus between $B(i)$ and $B(i+1)$.
	Each edge of $S(i)$ has a triangle attached to it.
	If a vertex $v_j$ is regular in $B(i)$ and has index zero, then the third vertices of triangles attached to the edges $v_{j-1}v_j$ and $v_jv_{j+1}$ are two consecutive regular vertices of $S(i+1)$.
	If the vertex $v_j$ is not regular or/and its index is positive, then there are some additional edges leading to consecutive vertices of $S(i+1)$ between those two triangles.
	Since we describe the universal cover of $G$ we assume that all these vertices on $S(i+1)$ are new and different.
	(Although in the original $G$ they could coincide.)
	From this construction the vertices of $S_{i+1}$ have degree $3$ or $4$ in $B_{i+1}$. 
	The number of the above additional edges equals the index of $v_j$ plus one if $v_j$ is generating.
	All vertices of $S(i+1)$ obtained on these additional edges are generating, and  the third vertices of the triangles attached to $v_{j-1}v_j$ and $v_jv_{j+1}$ are regular.

	Since $G$ itself is simply-connected, it actually must coincide with $\widetilde G$.
	Hence, our description of the $B(i)$ is the description of metric balls in the $1$-skeleton of $G$.
		
	If $I(i)$ is the sum of indices of points of the circle $S(i)$ and $J(i)$ is the number of generating points on it, then $|S(i+1)|=|S(i)|+I(i)+J(i)$ and $J(i+1)=I(i)+J(i)$.
	
	Therefore, if the sum of indices of vertices of $G$ is at least $C$, then for sufficiently large $i$ we have $|J(i)|\geq C+6$.
	After summation we obtain $|S(i)|\geq (C+6)i-O(1)$.
	After another summation we have that the total number of points of $B(i)$ is at least $\frac{6+C}{2} i^2-O(i)$.
	
	Let us compare it with the similar geometric bound.
	Since the diameter of each tile is not greater than $D$, all tiles corresponding to vertices in $B(i)$ lie inside the disk of radius $i\cdot D +D$ (in the original metric of the plane) centered at a point of the tile corresponding to the origin $o$ of the graph $G$.
	Each tile has area at least $A$, so we can conclude that the number of tiles lying inside the disk is not greater than $\frac{\pi(i+1)^2D^2}{A}$ for any $i$. 
	That gives us the bound $C\leq \frac{2\pi D^2}{A}-6$.
\end{proof}

\parbox[b]{0.4\textwidth}{
	\begin{center}
		\includegraphics{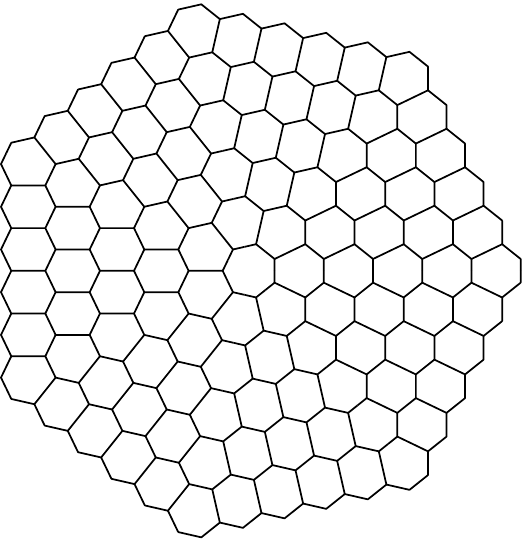}\\
	\end{center}
}
\hskip 0.3cm
\parbox[b]{0.5\textwidth}{
	\begin{center}
		\includegraphics{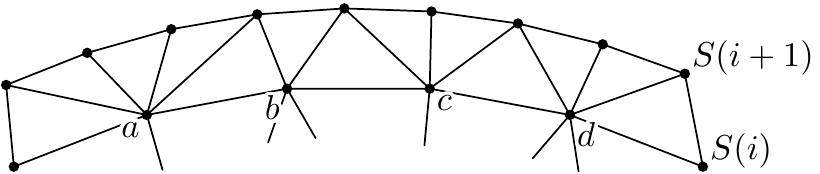}\\
	\end{center}
}

\parbox[t]{0.4\textwidth}{
	\f Tiling of the plane with hexagons and a single heptagon.
	}
\hskip 0.3cm
\parbox[t]{0.55\textwidth}{
	\f A part of the annulus between $S(i)$ and $S(i+1)$.
		Vertices $a$ and $c$ are generating, $a$ and $d$ have non-zero~index.
	}


\begin{thebibliography}{1}

\bibitem{toth1953lagerungen}
L.~{F}{e}{j}{e}{s}~T{\'o}th.
\newblock {\em Lagerungen in der {E}bene, auf der {K}ugel und im {R}aum}.
\newblock Die Grundlehren der Mathematischen Wissenschaften in
  Einzeldarstellungen mit besonderer Ber\"ucksichtigung der Anwendungsgebiete,
  Band LXV. Springer-Verlag, Berlin-G\"ottingen-Heidelberg, 1953.

\bibitem{markelov2014lj}
S.~Markelov
\newblock A question to friends mathematicians (a partition of the plane into convex heptagons).
\newblock URL:https://gaz-v-pol.livejournal.com/140610.html.
\newblock June 20, 2014.

\bibitem{niven1978convex}
I.~Niven.
\newblock Convex polygons that cannot tile the plane.
\newblock {\em Amer. Math. Monthly}, 85(10):785--792, 1978.

\bibitem{stehling1989uber}
T.~Stehling.
\newblock {\em \"{U}ber kombinatorische und graphentheoretische {E}igenschaften
  normaler {P}flasterungen}.
\newblock ProQuest LLC, Ann Arbor, MI, 1989.
\newblock Thesis (Dr.rer.nat)--Universitaet Dortmund (Germany).

\end{thebibliography}
\end{document}